\documentclass[10 pt]{amsart}
\usepackage{amssymb}
\usepackage[all]{xy}
\newtheorem{theorem}{Theorem}[]
\newtheorem{lemma}[theorem]{Lemma}
\newtheorem{definition}[theorem]{Definition}
\newtheorem{question}[theorem]{Question}
\newtheorem{remark}[theorem]{Remark}
\def\H{\mathfrak{M}}
\def\o{\mathfrak{o}}
\def\m{\mathfrak{r}}
\def\Pa{\mathfrak{P}}
\def\C{\mathbf{C}}
\def\P{\Phi}
\def\S{\Psi}
\def\L{\Lambda}
\def\G{\Gamma}
\def\om{\omega}
\def\to{\longrightarrow}
\def\ot{\otimes}
\def\h{\hspace}
\begin{document}
\baselineskip14pt
\title[Quantum Functor $\mathfrak{M}\mathfrak{o}\mathfrak{r}$]{Quantum Functor $\mathfrak{M}\mathfrak{o}\mathfrak{r}$}
\author[M. M. Sadr]{Maysam Maysami Sadr}
\email{sadr@iasbs.ac.ir}
\address{Department of  Mathematics, Institute for Advanced Studies in Basic Sciences (IASBS),
P. O. Box 45195-1159 Zanjan, Iran.}
\subjclass[2000]{46L05, 46L85, 46M20}
\keywords{Category of C*-algebras, quantum family of maps, non commutative algebraic topology}
\begin{abstract}
Let ${\bf Top}_c$ be the category of compact spaces and continuous maps and
${\bf Top}_f\subset{\bf Top}_c$ be the full subcategory of finite spaces. Consider the covariant functor
$Mor:{\bf Top}_f^{op}\times{\bf Top}_c\to{\bf Top}_c$ that associates any pair $(X,Y)$ with the space of all morphisms
from $X$ to $Y$. In this paper, we describe a non commutative version of $Mor$. More pricelessly, we define
a functor $\H\o\m$, that takes any pair $(B,C)$ of a finitely generated unital C*-algebra $B$ and a finite dimensional
C*-algebra $C$ to the quantum family of all morphism from $B$ to $C$. As an application we introduce a non commutative version of path functor.
\end{abstract}
\maketitle
\section{Introduction}
Let $\bf Top$ be the category of topological spaces and continuous maps. For every $X,Y\in{\bf Top}$, denote
by $Mor(X,Y)$ the set of all morphisms (continuous maps) from $X$ to $Y$. Also for spaces $X_1,X_2,Y_1,Y_2$ and
morphisms $f:X_2\to X_1$, $g:Y_1\to Y_2$, denote by $Mor(f,g)$ the map $h\longmapsto ghf$ from $Mor(X_1,Y_1)$ to
$Mor(X_2,Y_2)$. Then $Mor$ can be considered as a covariant functor from the product category
${\bf Top}^{op}\times{\bf Top}$ to $\bf Top$, where $Mor(X,Y)$ has the compact-open topology. Now, suppose that
${\bf Top}_c\subset{\bf Top}$ and ${\bf Top}_f\subset{\bf Top}$ are the full subcategories of compact Hausdorff
spaces and finite discrete spaces, respectively. Then the restriction of $Mor$ to ${\bf Top}_f^{op}\times{\bf Top}_c$
takes its values in ${\bf Top}_c$:
$$Mor:{\bf Top}_f^{op}\times{\bf Top}_c\to{\bf Top}_c.$$
The aim of this paper is description of a {\it quantum} ({\it non commutative}) version of $Mor$. We define a
covariant functor
$$\H\o\m:\C^*_{fg}\times{\C^*_{fd}}^{op}\to\C^*_{fg},$$
where $\C^*_{fg}$ is the category of finitely generated unital C*-algebras and $\C^*_{fd}$ is the full subcategory
of finite dimensional C*-algebras, such that for C*-algebras $B,C$, $\H\o\m(B,C)$ is the quantum family of all
morphisms from $B$ to $C$. For clearness of the idea behind this definition, we recall some basic
terminology of Non Commutative Geometry.

Let $\C^*$ be the category of unital C*-algebras and unital *-homomorphisms
and let $\C^*_{com}$ be the full subcategory of commutative algebras. For every $X\in{\bf Top}_c$ and
$A\in\C^*_{com}$, let $\mathfrak{f}X$ and $\mathfrak{q}A$ be the C*-algebra of continuous complex valued maps on
$X$ and the spectrum of $A$ with w* topology, respectively. Then the famous Gelfand Theorem says that
${\bf Top}_c$ and $\C^*_{com}$ are dual of each other, under functors $\mathfrak{f}$ and $\mathfrak{q}$. Thus for
every $A\in\C^*$, one can consider a {\it symbolic} notion $\mathfrak{q}A$ of a quantum or non commutative space
that its algebra of functions ($\mathfrak{f}\mathfrak{q}A$) is $A$. In this correspondence, for $A\in\C^*_{fg}$
and $B\in\C^*_{fd}$, $\mathfrak{q}A$ and $\mathfrak{q}B$ are called {\it finite dimensional} compact quantum
space and {\it discrete finite} quantum space, respectively.

Now, suppose that $X,Y,Z$ are topological spaces and $f:X\to Mor(Z,Y)$ is a continuous map. One can redefine
the map $f$ in a useful manner:
$$f:X\times Z\to Y\quad\quad(x,z)\longmapsto f(x)(z).$$
Thus, one can consider a {\it family} $\{f(x)\}_{x\in X}$ of maps from $Z$ to $Y$ {\it parameterized} by
$f$ and with {\it parameters} $x$ in $X$, as a map from $X\times Z$ to $Y$. Using this, Woronowicz \cite{W1}
defined the notion of {\it quantum family of maps} from quantum space $\mathfrak{q}C$ to quantum space
$\mathfrak{q}B$ as a pair $(\mathfrak{q}A,\P)$, where $\P:B\to C\ot A$ is a *-homomorphism and $\ot$ denotes the
minimal tensor product of C*-algebras. (Note that Woronowicz has used the terminology "pseudo space" instead of
"quantum space". Our terminology here is from Soltan's paper \cite{S1}.) Also, using a universal property,
he defined the notion of quantum family of {\it all} maps, that may or may not exists in general case, see
Section 2.

Now for the definition of $\H\o\m$, we let $\H\o\m(B,C)$ be the quantum family of all maps from $\mathfrak{q}C$
to $\mathfrak{q}B$.

In the following sections, we use the terminology "quantum family of morphisms from $B$ to $C$"
instead of "quantum family of maps from $\mathfrak{q}C$ to $\mathfrak{q}B$".
\section{Quantum family of morphisms}
Let $B,B',C,C'$ be unital C*-algebras. We denote by $B\ot C$ and $B\oplus C$  the minimal tensor product and
direct sum, respectively. For *-homomorphisms $\P:B\to C$ and $\P':B'\to C'$, $\P\ot\P'$ denotes the natural
homomorphism from $B\ot B'$ to $C\ot C'$ defined by $b\ot b'\to\P(b)\ot\P'(b')$ ($b\in B,b'\in B'$).
Also, let $\P\oplus \P':B\oplus B'\to C\oplus C'$ be the homomorphism defined by $\P\oplus \P'(b,b')=(\P(b),\P'(b'))$.
Denote by $C^\circ$, the space of all bounded functionals on $C$, and by $1_C$ the unite element of $C$.

A quantum family $(A,\P)$ of morphisms from $B$ to $C$ consists of a
unital C*-algebra $A$ and a unital *-homomorphism $\P:B\to C\ot A$.
\begin{definition}
Let $B,C$ and $(A,\P)$ be as above. Then $(A,\P)$ is called a quantum family of all morphisms from $B$ to $C$
if for every unital C*-algebra $D$ and any unital *-homomorphism $\S:B\to C\ot D$, there is a unique unital
*-homomorphism $\L:A\to D$ such that the following diagram is commutative:

\[\xymatrix{B\ar[rr]^-{\P}\ar@{=}[d]&& C\ot A\ar[d]^{id_C\ot\L}\\
B\ar[rr]^-{\S}&& C\ot D}\]

\end{definition}
If $(A,\P)$ and $(A',\P')$ are two quantum families of all morphisms from $B$ to $C$, then by the above
universal property there is a isometric *-isomorphism between $A$ and $A'$.
\begin{theorem}\label{t1}
Let $B$ be a finitely generated unital C*-algebra and $C$ be a finite dimensional C*-algebra. Then the
quantum family $(A,\P)$ of all morphisms from $B$ to $C$ exists. Also $A$ is finitely generated
(and unital) and the set $G=\{(\om\ot id_A)\P(b): b\in B, \om\in C^\circ\}$ is a generator for $A$ (the smallest
closed *-subalgebra of $A$ containing $G$ is equal to $A$).
\end{theorem}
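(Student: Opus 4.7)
The plan is to construct $A$ by generators and relations, with one generator for each pair (generator of $B$, basis vector of $C$). Fix generators $b_1,\dots,b_n$ of $B$ as a C*-algebra and a basis $c_1,\dots,c_m$ of $C$, together with dual functionals $\om_1,\dots,\om_m\in C^\circ$ satisfying $\om_j(c_i)=\delta_{ij}$. The starting observation is that any unital *-homomorphism $\S:B\to C\ot D$ is completely determined by the $nm$ elements $d_{s,i}:=(\om_i\ot id_D)\S(b_s)\in D$, via $\S(b_s)=\sum_i c_i\ot d_{s,i}$ on generators, and one has the uniform bound $\|d_{s,i}\|\le\|\om_i\|\,\|b_s\|$.

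Let $\mathcal{F}$ be the free unital *-algebra on symbols $x_{s,i}$ ($1\le s\le n$, $1\le i\le m$). By the universal property of $\mathcal{F}$, each candidate $\S$ as above produces a unique unital *-homomorphism $\pi_\S:\mathcal{F}\to D$ with $\pi_\S(x_{s,i})=d_{s,i}$. Define
\[\|f\|_u:=\sup_\S\|\pi_\S(f)\|\qquad(f\in\mathcal{F}).\]
This is a C*-seminorm, and the bound $\|\pi_\S(x_{s,i})\|\le\|\om_i\|\,\|b_s\|$ propagates to every *-polynomial, so $\|f\|_u<\infty$. Let $A$ be the Hausdorff C*-completion of $\mathcal{F}$ with respect to $\|\cdot\|_u$; then $A$ is a unital C*-algebra generated as such by the images of the $nm$ elements $x_{s,i}$, hence finitely generated.

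Next, I construct $\P$. Let $B_0$ be the free unital *-algebra on $y_1,\dots,y_n$ with canonical quotient $\iota:B_0\to B$ sending $y_s$ to $b_s$, with kernel $J_0$, and let $\tilde\P_0:B_0\to C\ot\mathcal{F}$ be the *-homomorphism determined by $y_s\mapsto\sum_i c_i\ot x_{s,i}$. By the biorthogonality of the $c_i$ and $\om_j$, one checks $(id_C\ot\pi_\S)\tilde\P_0=\S\circ\iota$ on generators, hence on all of $B_0$. For $p\in J_0$, writing $\tilde\P_0(p)=\sum_k c_k\ot\alpha_{p,k}$, this gives $\sum_k c_k\ot\pi_\S(\alpha_{p,k})=\S(\iota(p))=0$, and linear independence of the $c_k$ forces $\pi_\S(\alpha_{p,k})=0$ for every $\S$; hence $\alpha_{p,k}=0$ in $A$, and $\tilde\P_0$ composed with $\mathcal{F}\to A$ descends to a *-homomorphism $B_0/J_0\to C\ot A$. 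The main technical step is continuity with respect to $\|\cdot\|_B$: because $C$ is finite-dimensional (in particular nuclear), the embedding $\bigoplus_\S\bar\pi_\S:A\hookrightarrow\prod_\S D_\S$ is isometric (since $\|a\|_A=\sup_\S\|\bar\pi_\S(a)\|$), hence so is $C\ot A\hookrightarrow\prod_\S(C\ot D_\S)$, and for $y=\iota(p)$ one obtains
\[\|\tilde\P(y)\|_{C\ot A}=\sup_\S\|(id_C\ot\pi_\S)\tilde\P_0(p)\|=\sup_\S\|\S(y)\|\le\|y\|_B.\]
Thus the map extends by continuity to the desired $\P:B\to C\ot A$.

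The universal property follows immediately: given $\S:B\to C\ot D$, let $\L:A\to D$ be the continuous extension of $\pi_\S$, well defined since $\pi_\S$ is contractive for $\|\cdot\|_u$ by construction. Then $(id_C\ot\L)\P(b_s)=\sum_i c_i\ot\pi_\S(x_{s,i})=\S(b_s)$, so the two maps agree on generators and hence on $B$. Uniqueness of $\L$ is forced because any such $\L$ must satisfy $\L(x_{s,i})=(\om_i\ot id_D)\S(b_s)$, which pins it down on a generating set of $A$. Finally, $x_{s,i}=(\om_i\ot id_A)\P(b_s)\in G$, and these elements generate $A$, so $G$ does as well. The hardest step is the tensor-norm estimate in the continuity argument; the finite-dimensionality of $C$ is essential both for extracting coordinates via the $\om_i$ and for reducing the norm on $C\ot A$ to a supremum indexed by the family of $\pi_\S$.
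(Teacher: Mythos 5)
Your proof is correct. The paper itself gives no argument for Theorem \ref{t1} --- the proof is literally ``See \cite{S1}'' --- and your construction (the universal C*-algebra on the slice elements $(\om_i\ot id)\S(b_s)$, with the uniform norm bound and the coordinate extraction both supplied by the finite-dimensionality of $C$) is essentially the standard generators-and-relations argument given in that reference, so there is nothing of substance to contrast.
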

\begin{proof}
See \cite{S1}.
\end{proof}
For examples of quantum families of morphisms see, \cite{W1} and \cite{S1}.
\section{Definition of the functor}
As previously, let $\C^*$ be the category of unital C*-algebras and unital homomorphisms. Also, let
$\C_{fg}^*\subset\C^*$ and $\C^*_{fd}\subset\C^*$ be the full subcategories of finitely generated and
finite dimensional C*-algebras, respectively. For $B_1,B_2\in\C^*$, denote by $Mor(B_1,B_2)$ the set
of all morphisms from $B_1$ to $B_2$ in $\C^*$. For more details on the category of C*-algebras, see \cite{W1}.
Note that by elementary results of the theory of C*-algebras, a morphism $f\in Mor(B_1,B_2)$ is an
isomorphism in the categorical sense (i.e. there is $g\in Mor(B_2,B_1)$ such that $gf=id_{B_1}$ and
$fg=id_{B_2}$) if and only if $f$ is an isometric *-isomorphism from $B_1$ onto $B_2$.

For any $B\in\C_{fg}^*$ and every $C\in\C^*_{fd}$, let $(\H\o\m(B,C),\Pa_{B,C})$ be the quantum family of
all morphisms from $B$ to $C$ ($\Pa$ stands for "parameter").

For every  $B_1,B_2\in\C_{fg}^*$, $C_1,C_2\in\C^*_{fd}$ and $f\in Mor(B_1,B_2)$, $g\in Mor(C_2,C_1)$,
let $\H\o\m(f,g)$ be the unique morphism from $\H\o\m(B_1,C_1)$ to $\H\o\m(B_2,C_2)$ in $\C_{fg}^*$
such that the following diagram is commutative:
\begin{equation}\label{d4}
\xymatrix{B_1\ar[rrrr]^-{\Pa_{B_1,C_1}}\ar[d]^{f}&& &&C_1\ot \H\o\m(B_1,C_1)\ar[d]^{id_{C_1}\ot \H\o\m(f,g)}\\
B_2\ar[rrrr]^-{(g\ot id_{\H\o\m(B_2,C_2)})\Pa_{B_2,C_2}}\ar@{=}[d]&& &&C_1\ot \H\o\m(B_2,C_2)\\
B_2\ar[rrrr]^-{\Pa_{B_2,C_2}}&& &&C_2\ot\H\o\m(B_2,C_2)\ar[u]^{g\ot id_{\H\o\m(B_2,C_2)}}}
\end{equation}
Let $B_3\in\C_{fg}^*$, $C_3\in\C^*_{fd}$  and $f'\in Mor(B_2,B_3)$, $g'\in Mor(C_3,C_2)$,
then by the definition we have the following commutative diagram:
\begin{equation}\label{d5}
\xymatrix{B_2\ar[rrrr]^-{\Pa_{B_2,C_2}}\ar[d]^{f'}&& && C_2\ot \H\o\m(B_2,C_2)\ar[d]^{id_{C_2}\ot \H\o\m(f',g')}\\
B_3\ar[rrrr]^-{(g'\ot id_{\H\o\m(B_3,C_3)})\Pa_{B_3,C_3}}&& && C_2\ot \H\o\m(B_3,C_3).}
\end{equation}
Then we have,
\begin{equation*}
\begin{split}
&(id_{C_1}\ot\H\o\m(f',g')\H\o\m(f,g))\Pa_{B_1,C_1}\\
=&(id_{C_1}\ot\H\o\m(f',g'))(id_{C_1}\ot\H\o\m(f,g))\Pa_{B_1,C_1}\\
=&(id_{C_1}\ot\H\o\m(f',g'))(g\ot id_{\H\o\m(B_2,C_2)})\Pa_{B_2,C_2}f\h{8mm}\text{(by \ref{d4})}\\
=&(g\ot id_{\H\o\m(B_3,C_3)})(id_{C_2}\ot\H\o\m(f',g'))\Pa_{B_2,C_2}f\\
=&(g\ot id_{\H\o\m(B_3,C_3)})(g'\ot id_{\H\o\m(B_3,C_3)}\Pa_{B_3,C_3}f')f\h{9mm}\text{(by \ref{d5})}\\
=&((gg')\ot id_{\H\o\m(B_3,C_3)})\Pa_{B_3,C_3}(f'f).
\end{split}
\end{equation*}
Thus by the uniqueness property of the definition of $\H\o\m(f'f,gg')$ we have,
$$\H\o\m(f'f,gg')=\H\o\m(f',g')\H\o\m(f,g).$$
Also, it is clear that
$$\H\o\m(id_B,id_C)=id_{\H\o\m(B,C)},$$
for every $B\in\C_{fg}^*$ and $C\in\C^*_{fd}$. Thus we have defined a covariant functor
$$\H\o\m:\C_{fg}^*\times\C^{*^{op}}_{fd}\to\C_{fg}^*$$
that is called Quantum Functor $\H\o\m$. We often write $\H$ instead of $\H\o\m$.
\section{Some properties}
In this section we prove some basic properties of the functor $\H\o\m$.
We need the following simple lemma.
\begin{lemma}\label{l1}
Let $A,A',B,B',C$ be C*-algebras, $\P,\P',\L,\G$ be *-homomorphisms and $\om\in C^\circ$ such that
the following diagram is commutative:
\[\xymatrix{B\ar[rr]^-{\P}\ar[d]^{\L}&& C\ot A \ar[d]^{id_C\ot\G}\\
B'\ar[rr]^-{\P'}&& C\ot A' }\]
Then for any $b\in B$, we have
$$\G(\om\ot id_A)\P(b)=(\om\ot id_{A'})\P'\L(b).$$
\end{lemma}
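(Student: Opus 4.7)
The plan is to reduce the lemma to the single natural identity
$$\Gamma\circ(\om\ot id_A)=(\om\ot id_{A'})\circ(id_C\ot\G)$$
as maps from $C\ot A$ to $A'$, and then apply this identity to the element $\P(b)$ and invoke commutativity of the given square.

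First I would verify the displayed identity on elementary tensors. For $c\in C$ and $a\in A$, the slice map sends $c\ot a$ to $\om(c)a$, so the left-hand side yields $\om(c)\G(a)$ and the right-hand side yields $(\om\ot id_{A'})(c\ot\G(a))=\om(c)\G(a)$. The identity then extends by linearity to the algebraic tensor product and by continuity to all of $C\ot A$, since both sides are bounded linear maps (the slice maps are bounded with norm $\|\om\|$, and $\G$ is contractive). This is really just the standard fact that slice maps are natural with respect to *-homomorphisms in the non-sliced variable.

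Once this identity is in hand, I would evaluate at $\P(b)$ and chain:
\begin{equation*}
\begin{split}
\G(\om\ot id_A)\P(b)
&=(\om\ot id_{A'})(id_C\ot\G)\P(b)\\
&=(\om\ot id_{A'})\P'\L(b),
\end{split}
\end{equation*}
where the second equality is exactly the commutativity hypothesis $(id_C\ot\G)\P=\P'\L$.

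The only delicate point is the well-definedness and boundedness of the slice maps $\om\ot id_A$ and $\om\ot id_{A'}$ on the minimal tensor product, but this is standard; given that $\om\in C^\circ$ is bounded, the slice map extends uniquely to a bounded linear map on $C\ot A$, and naturality in the second factor is then just the observation made on elementary tensors. So the real content of the lemma is almost nothing beyond recognizing that the diagram commutes after slicing.
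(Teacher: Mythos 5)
Your proof is correct and follows essentially the same route as the paper: both arguments reduce the lemma to the naturality identity $(\om\ot id_{A'})(id_C\ot\G)=\G(\om\ot id_A)$ and then apply the commutativity hypothesis $(id_C\ot\G)\P=\P'\L$. The only difference is that you verify that identity on elementary tensors and extend by continuity, whereas the paper simply asserts it.
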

\begin{proof}
By commutativity of the diagram we have
$$(id_C\ot\G)\P(b)=\P'\L(b),$$
and thus
$$(\om\ot id_{A'})(id_C\ot\G)\P(b)=(\om\ot id_{A'})\P'\L(b).$$
Then the left hand side of the latter equation is equal to $\G(\om\ot id_A)\P(b)$, since
$$(\om\ot id_{A'})(id_C\ot\G)=\G(\om\ot id_A).$$
\end{proof}
\begin{theorem}
Let $B\in\C_{fg}^*$ and $C_1,C_2\in\C^*_{fd}$. Then $\H\o\m(B,C_1\ot C_2)$ and $\H\o\m(\H\o\m(B,C_1),C_2)$
are canonically isometric *-isomorphic.
\end{theorem}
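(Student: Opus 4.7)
Write $(H,\Pa)=(\H(B,C_1\ot C_2),\Pa_{B,C_1\ot C_2})$, $(H_1,\Pa_1)=(\H(B,C_1),\Pa_{B,C_1})$, and $(H_{12},\Pa_{12})=(\H(H_1,C_2),\Pa_{H_1,C_2})$; the last one is defined because Theorem \ref{t1} guarantees that $H_1\in\C^*_{fg}$. The plan is to exhibit mutually inverse morphisms $\alpha:H\to H_{12}$ and $\beta:H_{12}\to H$ using the universal property of Definition 1. The underlying idea is Yoneda-style: both $H$ and $H_{12}$ classify the same data, namely unital *-homomorphisms $B\to C_1\ot C_2\ot D$, once one identifies $(C_1\ot C_2)\ot D$ with $C_1\ot(C_2\ot D)$ via canonical associativity of the minimal tensor product.

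I would construct $\alpha$ by applying the universal property of $(H,\Pa)$ to the unital *-homomorphism $(id_{C_1}\ot\Pa_{12})\Pa_1:B\to C_1\ot C_2\ot H_{12}$; this yields a unique $\alpha\in Mor(H,H_{12})$ satisfying
\[(id_{C_1\ot C_2}\ot\alpha)\Pa=(id_{C_1}\ot\Pa_{12})\Pa_1.\]
For $\beta$: viewing $\Pa:B\to C_1\ot(C_2\ot H)$, the universal property of $(H_1,\Pa_1)$ with target $C_2\ot H$ produces a unique $\gamma\in Mor(H_1,C_2\ot H)$ with $(id_{C_1}\ot\gamma)\Pa_1=\Pa$; then the universal property of $(H_{12},\Pa_{12})$ applied to $\gamma$ produces a unique $\beta\in Mor(H_{12},H)$ with $(id_{C_2}\ot\beta)\Pa_{12}=\gamma$.

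To finish, I verify $\beta\alpha=id_H$ and $\alpha\beta=id_{H_{12}}$ via the uniqueness clauses. For the first, chaining the defining equations for $\alpha$, $\beta$, $\gamma$ directly yields $(id_{C_1\ot C_2}\ot\beta\alpha)\Pa=\Pa$, forcing $\beta\alpha=id_H$ by uniqueness for $H$. For the second, I apply the defining equation for $\alpha$ to get $(id_{C_1}\ot(id_{C_2}\ot\alpha)\gamma)\Pa_1=(id_{C_1}\ot\Pa_{12})\Pa_1$; uniqueness for $H_1$ then gives $(id_{C_2}\ot\alpha)\gamma=\Pa_{12}$; substituting $\gamma=(id_{C_2}\ot\beta)\Pa_{12}$ produces $(id_{C_2}\ot\alpha\beta)\Pa_{12}=\Pa_{12}$, and uniqueness for $H_{12}$ forces $\alpha\beta=id_{H_{12}}$.

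The main obstacle is purely clerical: tracking the many identity factors and invoking the silent associativity identifications $(C_1\ot C_2)\ot X\cong C_1\ot(C_2\ot X)$ consistently. Once that bookkeeping is in place the whole argument is formal, and since every isomorphism in $\C^*$ is automatically an isometric *-isomorphism (as recorded in Section 3), the canonical $\alpha$ is isometric, giving the stated result.
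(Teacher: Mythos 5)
Your proposal is correct, and its skeleton coincides with the paper's own proof: your $\alpha$, $\gamma$, $\beta$ are exactly the paper's $\S$ (Diagram \ref{d1}), $\G$ (Diagram \ref{d2}) and $\S'$ (Diagram \ref{d3}), and your verification of $\beta\alpha=id$ is the same chain of substitutions the paper uses to obtain (\ref{e2}). The one genuine difference lies in how you establish the intermediate identity $(id_{C_2}\ot\alpha)\gamma=\Pa_{\H(B,C_1),C_2}$, i.e.\ the paper's (\ref{e3}), which is the crux of the other composite. The paper proves it by evaluating both sides on the generator set $\{(\om\ot id_{\H(B,C_1)})\Pa_{B,C_1}(b)\}$ supplied by Theorem \ref{t1} and invoking Lemma \ref{l1}; you instead observe that both sides, after tensoring with $id_{C_1}$ and precomposing with $\Pa_{B,C_1}$, equal $(id_{C_1}\ot\Pa_{\H(B,C_1),C_2})\Pa_{B,C_1}$, so the uniqueness clause of the universal property of $\H\o\m(B,C_1)$ (applied with target $D=C_2\ot\H\o\m(\H\o\m(B,C_1),C_2)$, which Definition 1 permits) forces them to coincide. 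Your route is purely formal: it uses neither the finite-generation statement of Theorem \ref{t1} nor Lemma \ref{l1}, and would work verbatim whenever the three quantum families of all morphisms exist, whereas the paper's computation leans on the explicit generators. What the paper's version buys in exchange is a concrete description of how $\S$ acts on generators, which it reuses in the surjectivity arguments of the later theorems. Your closing remarks --- that the only bookkeeping is the associativity identification of the minimal tensor product, and that an isomorphism in $\C^*$ is automatically isometric --- are both accurate, so the proof is complete as written.
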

\begin{proof}
Let
$$\S:\H(B,C_1\ot C_2)\to C_1\ot C_2\ot\H(\H(B,C_1),C_2)$$
be the unique morphism such that the following diagram is commutative:
\begin{equation}\label{d1}
\xymatrix{B\ar[rrrr]^-{\Pa_{B,C_1\ot C_2}}\ar[d]^{\Pa_{B,C_1}}&& &&
C_1\ot C_2\ot\H(B,C_1\ot C_2)\ar[d]^{id_{C_1\ot C_2}\ot\S}\\
C_1\ot\H(B,C_1)\ar[rrrr]^-{id_{C_1}\ot\Pa_{\H(B,C_1),C_2}}&& && C_1\ot C_2\ot\H(\H(B,C_1),C_2).}
\end{equation}
Suppose that the morphisms
$$\G:\H(B,C_1)\to C_2\ot\H(B,C_1\ot C_2)$$
and
$$\S':\H(\H(B,C_1),C_2)\to\H(B,C_1\ot C_2)$$
are the unique morphisms such that the following diagrams are commutative.
\begin{equation}\label{d2}
\xymatrix{B\ar[rrrr]^-{\Pa_{B,C_1}}\ar@{=}[d]&& && C_1\ot\H(B,C_1)\ar[d]^{id_{C_1}\ot\G}\\
B\ar[rrrr]^-{\Pa_{B,C_1\ot C_2}}&& && C_1\ot C_2\ot\H(B,C_1\ot C_2),}
\end{equation}
\begin{equation}\label{d3}
\xymatrix{\H(B,C_1)\ar[rrrr]^-{\Pa_{\H(B,C_1),C_2}}\ar@{=}[d]&& &&
C_2\ot\H(\H(B,C_1),C_2)\ar[d]^{id_{C_2}\ot\S'}\\
\H(B,C_1)\ar[rrrr]^-{\G}&& && C_2\ot\H(B,C_1\ot C_2).}
\end{equation}
Then commutativity of Diagram \ref{d3}, implies that
\begin{equation}\label{e1}
id_{C_1}\ot\G=(id_{C_1\ot C_2}\ot\S')(id_{C_1}\ot\Pa_{\H(B,C_1),C_2}).
\end{equation}
Now, we have
\begin{equation*}
\begin{split}
&(id_{C_1\ot C_2}\ot(\S'\S))\Pa_{B,C_1\ot C_2}\\
=&(id_{C_1\ot C_2}\ot\S')(id_{C_1\ot C_2}\ot\S)\Pa_{B,C_1\ot C_2}\\
=&(id_{C_1\ot C_2}\ot\S')(id_{C_1}\ot\Pa_{\H(B,C_1),C_2})\Pa_{B,C_1}\h{10mm}\text{(by \ref{d1})}\\
=&(id_{C_1}\ot\G)\Pa_{B,C_1}\h{50mm}\text{(by \ref{e1})}\\
=&\Pa_{B,C_1\ot C_2}.\h{59.5mm}\text{(by \ref{d2})}
\end{split}
\end{equation*}
Thus by the universal property of quantum family of all morphisms, we have
\begin{equation}\label{e2}
\S'\S=id_{\H\o\m(B,C_1\ot C_2)}
\end{equation}
Now, we show that
\begin{equation}\label{e3}
(id_{C_2}\ot\S)\G=\Pa_{\H\o\m(B,C_1),C_2}.
\end{equation}
By Theorem \ref{t1}, it is sufficient to prove
$$(id_{C_2}\ot\S)\G(a)=\Pa_{\H(B,C_1),C_2}(a),$$
where $a=(\om\ot id_{\H(B,C_1)})\Pa_{B,C_1}(b)$ for $\om\in C^\circ_1$ and $b\in B$. We have
\begin{equation*}
\begin{split}
&(id_{C_2}\ot\S)\G(a)\\
=&(id_{C_2}\ot\S)\G(\om\ot id_{\H(B,C_1)})\Pa_{B,C_1}(b)\\
=&(id_{C_2}\ot\S)(\om\ot\Pa_{B,C_1\ot C_2})(b)\h{45.5mm}\text{(by \ref{d2})}\\
=&(\om\ot id_{C_2\ot\H(\H(B,C_1),C_2)})(id_{C_1}\ot\Pa_{\H(B,C_1),C_2})
\Pa_{B,C_1}(b)\h{5mm}\text{(by Lemma \ref{l1})}\\
=&\Pa_{\H(B,C_1),C_2}(\om\ot id_{\H(B,C_1)})\Pa_{B,C_1}(b)\\
=&\Pa_{\H(B,C_1),C_2}(a).
\end{split}
\end{equation*}
Now, we have
\begin{equation*}
\begin{split}
&(id_{C_2}\ot(\S\S'))\Pa_{\H(B,C_1),C_2}\\
=&(id_{C_2}\ot\S)(id_{C_2}\ot\S')\Pa_{\H(B,C_1),C_2}\\
=&(id_{C_2}\ot\S)\G\h{60mm}\text{(by \ref{d3})}\\
=&\Pa_{\H(B,C_1),C_2}\h{60mm}\text{(by \ref{e3})}.
\end{split}
\end{equation*}
Thus by the universal property of $\H\o\m$, we have
\begin{equation}\label{e4}
\S\S'=id_{\H(\H(B,C_1),C_2)}.
\end{equation}
At last, \ref{e2} and \ref{e4} show that  $\H\o\m(B,C_1\ot C_2)$ and $\H\o\m(\H\o\m(B,C_1),C_2)$
are isomorphic in $\C^*$.
\end{proof}
The above Theorem corresponds to the following fact in ${\bf Top}_{c}$:
\\{\it Let $X_1,X_2$ and $Y$ be  compact Hausdorff spaces. Then the map
$$F:Mor(X_1\times X_2,Y)\to Mor(X_1,Mor(X_2,Y)),$$
defined by $(F(f))(x_1)(x_2)=f(x_1,x_2)$ for $f\in Mor(X_1\times X_2,Y),x_1\in X_1,x_2\in X_2$, is a homeomorphism
of topological spaces.}
\\The proof of this topological fact is elementary. For some general results on this type, see \cite{J}.

Let $X,Y_1,Y_2$ be in ${\bf Top}$ and let $f:Y_1\to Y_2$ be an injective continuous map. Then the morphism
$Mor(id_X,f):Mor(X,Y_1)\to Mor(X,Y_2)$, defined by $g\longmapsto fg$ is also injective. Analogously, in $\C^*$
we have:
\begin{theorem}
Let $B_1,B_2\in\C_{fg}^*$, $C\in\C^*_{fd}$ and let $f$ be in $Mor(B_1,B_2)$. Suppose that $f$ is a surjective map.
Then $\H\o\m(f,id_C)$ is also surjective.
\end{theorem}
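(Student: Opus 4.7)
The plan is to reduce the surjectivity of $\H(f,id_C)$ to a statement about generators, using Theorem \ref{t1} together with Lemma \ref{l1}.

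First I would record the defining diagram of $\H(f,id_C)$, which in this case (with $g=id_C$) simply says that
\[(id_C\ot\H(f,id_C))\Pa_{B_1,C}=\Pa_{B_2,C}f.\]
This is exactly the hypothesis of Lemma \ref{l1} with $B=B_1$, $B'=B_2$, $A=\H(B_1,C)$, $A'=\H(B_2,C)$, $\P=\Pa_{B_1,C}$, $\P'=\Pa_{B_2,C}$, $\L=f$, $\G=\H(f,id_C)$. Applying the lemma, for every $b_1\in B_1$ and every $\om\in C^\circ$,
\[\H(f,id_C)\bigl((\om\ot id_{\H(B_1,C)})\Pa_{B_1,C}(b_1)\bigr)=(\om\ot id_{\H(B_2,C)})\Pa_{B_2,C}(f(b_1)).\]

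Next I would use the surjectivity hypothesis: given an arbitrary $b_2\in B_2$, choose $b_1\in B_1$ with $f(b_1)=b_2$. The identity above then shows that the generator $(\om\ot id_{\H(B_2,C)})\Pa_{B_2,C}(b_2)$ of $\H(B_2,C)$ lies in the image of $\H(f,id_C)$. By Theorem \ref{t1}, such elements (as $b_2$ and $\om$ vary) generate $\H(B_2,C)$ as a C*-algebra, so the image of $\H(f,id_C)$ contains a generating set.

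Finally, I would conclude by invoking the standard fact that the image of a unital *-homomorphism between C*-algebras is a closed unital *-subalgebra of the target. Since this closed *-subalgebra contains the generators described by Theorem \ref{t1}, it must coincide with $\H(B_2,C)$, which gives surjectivity. The argument is essentially mechanical; the only substantive point is recognizing that the generator description in Theorem \ref{t1} converts surjectivity of $f$ on $B_2$ into surjectivity of $\H(f,id_C)$ on a generating set, and this is precisely what Lemma \ref{l1} provides. There is no real obstacle beyond setting up the application of the lemma correctly.
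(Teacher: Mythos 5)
Your proposal is correct and follows essentially the same route as the paper's proof: identify the generating set of $\H\o\m(B_2,C)$ from Theorem \ref{t1}, use surjectivity of $f$ together with Lemma \ref{l1} to show each generator lies in the image of $\H\o\m(f,id_C)$, and conclude via closedness of the range of a *-homomorphism. Your write-up is simply a more explicit version of the paper's argument, spelling out the application of Lemma \ref{l1}.
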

\begin{proof}
By Theorem \ref{t1}, the set
$$G=\{(\om\ot id_{\H(B_2,C)})\Pa_{B_2,C}(b):\quad b\in B_2,\om\in C^\circ\}$$
is a generator for $\H(B_2,C)$. For every $b_2\in B_2$ there is a $b_1\in B_1$ such that $f(b_1)=b_2$.
Thus by Lemma \ref{l1}, we have
$$G=\{\H(f,id_C)(\om\ot id_{\H(B_1,C)})\Pa_{B_1,C}(b):\quad b\in B_1,\om\in C^\circ\}.$$
Thus $\H\o\m(f,id_C):\H\o\m(B_1,C)\to\H\o\m(B_2,C)$ is surjective, since the image of every *-homomorphism
between C*-algebras is closed.
\end{proof}
\begin{theorem}
Let $B_1,B_2\in\C_{fg}^*$ and $C_1,C_2\in\C^*_{fd}$, then there is a canonical morphism $\S$ from
$\H\o\m(B_1\oplus B_2,C_1\oplus C_2)$ onto $\H\o\m(B_1,C_1)\oplus\H\o\m(B_2,C_2)$.
\end{theorem}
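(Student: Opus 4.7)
The strategy is to construct $\S$ via the universal property of $A := \H\o\m(B_1 \oplus B_2, C_1 \oplus C_2)$ and to verify surjectivity using Theorem \ref{t1} and Lemma \ref{l1}. Write $A_i := \H\o\m(B_i, C_i)$.

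First I would build a unital *-homomorphism
$$\Phi \colon B_1 \oplus B_2 \longrightarrow (C_1 \oplus C_2) \ot (A_1 \oplus A_2)$$
and then invoke the universal property of $A$ to obtain the unique morphism $\S \colon A \to A_1 \oplus A_2$ with $(id_{C_1 \oplus C_2} \ot \S) \Pa_{B_1 \oplus B_2, C_1 \oplus C_2} = \Phi$. Using the canonical decomposition $(C_1 \oplus C_2) \ot (A_1 \oplus A_2) \cong \bigoplus_{k, \ell \in \{1, 2\}} C_k \ot A_\ell$, I specify $\Phi$ by its four components $\Phi_{k\ell} \colon B_1 \oplus B_2 \to C_k \ot A_\ell$. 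On the diagonal, set $\Phi_{kk}(b_1, b_2) := \Pa_{B_k, C_k}(b_k)$ (project to $B_k$, then apply the structure morphism). On the off-diagonal $k \ne \ell$, arrange $\Phi_{k\ell}$ to factor as the projection $B_1 \oplus B_2 \to B_\ell$ followed by some unital *-homomorphism $B_\ell \to C_k \ot A_\ell$; the crucial property for the next step is that $\Phi_{k\ell}$ depends only on the $B_\ell$-coordinate.

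Second, for surjectivity I would use that by Theorem \ref{t1}, each $A_i$ is generated as a closed *-subalgebra by $G_i := \{(\om \ot id_{A_i}) \Pa_{B_i, C_i}(b) : b \in B_i,\ \om \in C_i^\circ\}$. Given $\om \in C_i^\circ$, extend it by zero to $\widetilde{\om} \in (C_1 \oplus C_2)^\circ$, and for $b \in B_i$ let $\widetilde{b} \in B_1 \oplus B_2$ be $b$ in the $i$-th coordinate and $0$ in the other. Lemma \ref{l1}, applied to the square defining $\S$, then yields
$$\S\bigl((\widetilde{\om} \ot id_A)\Pa_{B_1 \oplus B_2, C_1 \oplus C_2}(\widetilde{b})\bigr) = (\widetilde{\om} \ot id_{A_1 \oplus A_2})\Phi(\widetilde{b}).$$
Now $\widetilde{\om}$ picks out only the $k = i$ summands of $\Phi(\widetilde{b})$; on the diagonal, $\Phi_{ii}(\widetilde{b}) = \Pa_{B_i, C_i}(b)$, and on the off-diagonal, $\Phi_{i\ell}(\widetilde{b}) = 0$ for $\ell \ne i$, since $\Phi_{i\ell}$ factors through the vanishing $B_\ell$-coordinate of $\widetilde{b}$. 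The right-hand side therefore equals the element of $A_1 \oplus A_2$ whose $i$-th entry is the generator $(\om \ot id)\Pa_{B_i, C_i}(b) \in G_i$ and whose other entry is zero. Hence the image of $\S$ contains both $G_1 \oplus \{0\}$ and $\{0\} \oplus G_2$; being a closed *-subalgebra of $A_1 \oplus A_2$ it therefore contains $A_1 \oplus \{0\}$ and $\{0\} \oplus A_2$, and so equals the whole $A_1 \oplus A_2$.

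The main obstacle is the off-diagonal step in constructing $\Phi$: producing canonical unital *-homomorphisms $B_\ell \to C_k \ot A_\ell$ when $k \ne \ell$. When $B_\ell$ is purely noncommutative and $C_k \ot A_\ell$ has no obvious matrix-unit or character structure compatible with $B_\ell$, no immediate choice presents itself, and one must exploit the finite-dimensional structure of $C_k$ or the universal structure of $A_\ell$ to supply these maps. Once any suitable $\Phi$ is in hand, the surjectivity argument is a direct application of Lemma \ref{l1} and the generator description of Theorem \ref{t1}.
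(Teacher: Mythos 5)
Your overall architecture --- define $\S$ by the universal property of $\H\o\m(B_1\oplus B_2,C_1\oplus C_2)$ applied to a homomorphism $\Phi$ into $(C_1\oplus C_2)\ot(\H\o\m(B_1,C_1)\oplus\H\o\m(B_2,C_2))$, then deduce surjectivity from Theorem \ref{t1} and Lemma \ref{l1} --- is exactly the paper's, and your surjectivity argument is fine (in fact slightly cleaner than the paper's, which feeds in pairs $(a_1,a_2)$ of generators rather than the elements $(a_1,0)$ and $(0,a_2)$). But as written the proposal does not prove the theorem, because the morphism $\S$ is never actually constructed: you leave the off-diagonal components $\Phi_{k\ell}$ ($k\neq\ell$) unspecified and concede that ``no immediate choice presents itself.'' Producing $\Phi$ is the entire content of the construction, so this is a genuine gap, not a deferred detail.

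The paper's resolution is blunt: it takes the off-diagonal components to be zero. Concretely, it lets $\G$ be the natural embedding of $(C_1\ot\H(B_1,C_1))\oplus(C_2\ot\H(B_2,C_2))$ into $(C_1\oplus C_2)\ot(\H(B_1,C_1)\oplus\H(B_2,C_2))$ and applies the universal property to $\Phi=\G(\Pa_{B_1,C_1}\oplus\Pa_{B_2,C_2})$. You should note, however, that the obstacle you ran into is real: this $\Phi$ sends $1$ to the projection $1_{C_1}\ot 1_{\H(B_1,C_1)}+1_{C_2}\ot 1_{\H(B_2,C_2)}$, which is not the unit of the target, so $\Phi$ is not unital and the universal property as stated (which quantifies over \emph{unital} $\S:B\to C\ot D$) does not literally apply; the paper passes over this in silence. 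So your diagnosis of where the difficulty sits is accurate and more careful than the source, but to convert the proposal into a proof you must either adopt the paper's choice $\Phi_{k\ell}=0$ and justify invoking the universal property for such projection-valued homomorphisms, or find genuinely unital off-diagonal components --- and for the latter there is no canonical candidate. Indeed the commutative picture suggests the naturally unital target is $\H(B_1,C_1)\ot\H(B_2,C_2)$, the algebra of the clopen subset $Mor(X_1,Y_1)\times Mor(X_2,Y_2)\subset Mor(X_1\sqcup X_2,Y_1\sqcup Y_2)$, rather than the direct sum appearing in the statement.
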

\begin{proof}
Let $B=B_1\oplus B_2$ and $C=C_1\oplus C_2$. Let
$$\G:(C_1\ot\H(B_1,C_1))\oplus(C_2\ot\H(B_2,C_2))\to C\ot(\H(B_1,C_1)\oplus\H(B_2,C_2))$$
be the natural embedding. Then, suppose that $\S$ is the unique morphism such that
the following diagram is commutative:
\[\xymatrix{B\ar[rr]^-{\Pa_{B,C}}\ar[d]^{\Pa_{B_1,C_1}\oplus\Pa_{B_2,C_2}}&&  C\ot\H(B,C)\ar[d]^{id_{C}\ot\S}\\
(C_1\ot\H(B_1,C_1))\oplus(C_2\ot\H(B_2,C_2))\ar[rr]^-{\G}&&  C\ot(\H(B_1,C_1)\oplus\H(B_2,C_2)).}\]
\\Now we prove that $\S$ is surjective. By Theorem \ref{t1}, the C*-algebra $\H(B_1,C_1)\oplus\H(B_2,C_2)$ is
generated by the set $G$ of all pairs $(a_1,a_2)$ where
$$a_1=(\om_1\ot id_{\H(B_1,C_1})\Pa_{B_1,C_1}(b_1)\quad\text{and}\quad
a_2=(\om_2\ot id_{\H(B_2,C_2})\Pa_{B_2,C_2}(b_2)$$
for $b_1\in B_1, b_2\in B_2, \om_1\in C^\circ_1,\om_2\in C^\circ_2$. It is easily checked that such pairs
are in the form of
$$(a_1,a_2)=((\om_1\oplus\om_2)\ot id_{\H(B_1,C_1)\oplus\H(B_2,C_2)})\G(\Pa_{B_1,C_1}\oplus\Pa_{B_2,C_2})(b_1,b_2),$$
and thus by Lemma \ref{l1}, we can write
$$(a_1,a_2)=\S((\om_1\oplus\om_2)\ot id_{\H(B,C)})\Pa_{B,C}(b_1,b_2).$$
Thus the generator set $G$ is in the image of $\S$ and therefore $\S$ is surjective.
\end{proof}
Similarly one can prove the following.
\begin{theorem}
Let $B_1,\cdots,B_n\in\C_{fg}^*$ and let $C_1,\cdots,C_n\in\C^*_{fd}$. Then there is a
canonical surjective *-homomorphism
$$\S:\H\o\m(B_1\oplus\cdots\oplus B_n,C_1\oplus\cdots\oplus C_n)\to\H\o\m(B_1,C_1)\oplus\cdots\oplus\H\o\m(B_n,C_n).$$
\end{theorem}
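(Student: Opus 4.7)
The statement is the $n$-fold generalization of the preceding theorem, and the natural approach is to repeat that argument with $n$ summands in place of two. Induction on $n$ (using the binary case as the inductive step applied to $(B_1\oplus\cdots\oplus B_{n-1})\oplus B_n$) would also work, but a direct construction keeps $\S$ explicit and is no harder, so I would follow the binary proof verbatim.

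Set $B=\bigoplus_{i=1}^n B_i$ and $C=\bigoplus_{i=1}^n C_i$, and let
$$\G:\bigoplus_{i=1}^n\bigl(C_i\ot\H(B_i,C_i)\bigr)\to C\ot\bigoplus_{i=1}^n\H(B_i,C_i)$$
be the natural embedding induced by the canonical inclusions $C_i\hookrightarrow C$ and $\H(B_i,C_i)\hookrightarrow\bigoplus_j\H(B_j,C_j)$. Since $\G\circ(\Pa_{B_1,C_1}\oplus\cdots\oplus\Pa_{B_n,C_n})$ is a unital *-homomorphism $B\to C\ot\bigoplus_i\H(B_i,C_i)$, the universal property of $(\H(B,C),\Pa_{B,C})$ supplies a unique morphism $\S$ with
$$(id_C\ot\S)\,\Pa_{B,C}=\G\circ(\Pa_{B_1,C_1}\oplus\cdots\oplus\Pa_{B_n,C_n}).$$

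For surjectivity I would invoke Theorem \ref{t1}: the codomain $\bigoplus_i\H(B_i,C_i)$ is generated by tuples $(a_1,\ldots,a_n)$ with $a_i=(\om_i\ot id_{\H(B_i,C_i)})\Pa_{B_i,C_i}(b_i)$ for $b_i\in B_i$ and $\om_i\in C_i^\circ$. A routine computation, parallel to the binary case, rewrites such a tuple as
$$(a_1,\ldots,a_n)=\bigl((\om_1\oplus\cdots\oplus\om_n)\ot id_{\bigoplus_i\H(B_i,C_i)}\bigr)\,\G\,(\Pa_{B_1,C_1}\oplus\cdots\oplus\Pa_{B_n,C_n})(b_1,\ldots,b_n),$$
after which Lemma \ref{l1}, applied to the defining diagram of $\S$ with $\om=\om_1\oplus\cdots\oplus\om_n\in C^\circ$, converts the right hand side into
$$\S\bigl((\om_1\oplus\cdots\oplus\om_n)\ot id_{\H(B,C)}\bigr)\Pa_{B,C}(b_1,\ldots,b_n).$$
Hence every generator of the codomain lies in the image of $\S$, and closedness of this image in the ambient C*-algebra gives surjectivity.

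No step presents a genuine obstacle; the whole argument is a careful transcription of the $n=2$ proof. The only point requiring attention is the bookkeeping of the canonical inclusions making up $\G$, together with the observation that $\om_1\oplus\cdots\oplus\om_n$ is a bounded functional on $C=\bigoplus_i C_i$ whose restriction to each $C_i$ recovers $\om_i$ — both of which are immediate from the direct-sum decomposition.
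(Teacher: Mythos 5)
Your proposal is correct and follows exactly the route the paper intends: the paper proves the binary case in full and dismisses the $n$-fold version with ``Similarly one can prove the following,'' and your direct construction of $\G$ and $\S$ for $n$ summands, together with the generator argument via Theorem \ref{t1} and Lemma \ref{l1}, is precisely that transcription. Nothing is missing.
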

The construction appearing in the following Theorem, is a special case of the notion of {\it composition}
of quantum families of maps, defined in \cite{S1}.
\begin{theorem}
Let $B\in\C_{fg}^*$ and $C,D\in\C^*_{fd}$. Then there is a canonical morphism
$$\S:\H\o\m(B,C)\to\H\o\m(C,D)\ot\H\o\m(B,C).$$
\end{theorem}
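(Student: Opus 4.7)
Strategy. By Theorem \ref{t1}, giving a unital $*$-homomorphism $\S:\H\o\m(B,C)\to\H\o\m(C,D)\ot\H\o\m(B,C)$ amounts to equipping $\H\o\m(C,D)\ot\H\o\m(B,C)$ with the structure of a quantum family of morphisms from $B$ to $C$: that is, to producing a unital $*$-homomorphism
$$\tilde\S:B\to C\ot\H\o\m(C,D)\ot\H\o\m(B,C),$$
after which $\S$ is the unique morphism with $(id_C\ot\S)\Pa_{B,C}=\tilde\S$. The plan is therefore to build $\tilde\S$ canonically from the data $\Pa_{B,C}$ and $\Pa_{C,D}$ and then invoke the universal property, exactly as in the preceding theorems of this section.

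Construction. Since the first tensor slot of the codomain of $\tilde\S$ must be $C$ (not the $D$ that would arise from the straight composition $(\Pa_{C,D}\ot id)\Pa_{B,C}$), I would take $\tilde\S$ to be the post-composition of $\Pa_{B,C}$ with the natural unital embedding $C\ot\H\o\m(B,C)\hookrightarrow C\ot\H\o\m(C,D)\ot\H\o\m(B,C)$ sending $c\ot a$ to $c\ot 1_{\H\o\m(C,D)}\ot a$. This is a unital $*$-homomorphism because $1_{\H\o\m(C,D)}$ is central, so Theorem \ref{t1} delivers a unique $\S$ with $(id_C\ot\S)\Pa_{B,C}=\tilde\S$. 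Applying Lemma \ref{l1} to this commuting square with $\om\in C^\circ$ then evaluates $\S$ on the generators of $\H\o\m(B,C)$, giving $\S\bigl((\om\ot id_{\H\o\m(B,C)})\Pa_{B,C}(b)\bigr)=1_{\H\o\m(C,D)}\ot(\om\ot id_{\H\o\m(B,C)})\Pa_{B,C}(b)$, and uniqueness in $(B,C,D)$ is routine from Theorem \ref{t1}.

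Main obstacle. The literal source $\H\o\m(B,C)$ forces the first tensor factor of the codomain of $\tilde\S$ to be $C$, which prevents $\Pa_{C,D}$ from entering nontrivially; the construction above collapses to the trivial coembedding $a\mapsto 1_{\H\o\m(C,D)}\ot a$. The notion of composition of quantum families from \cite{S1}, alluded to in the paragraph preceding the statement, would instead pass through
$$(\Pa_{C,D}\ot id_{\H\o\m(B,C)})\Pa_{B,C}:B\to D\ot\H\o\m(C,D)\ot\H\o\m(B,C),$$
which is a quantum family of morphisms from $B$ to $D$, and hence produces a genuinely nontrivial morphism with source $\H\o\m(B,D)$ via the universal property of $\H\o\m(B,D)$. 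Either way the structural step is the same invocation of Theorem \ref{t1} together with Lemma \ref{l1}; the subtlety in writing up the proof as stated is deciding whether the intended $\S$ is the literal (trivial) coembedding out of $\H\o\m(B,C)$ or its composition-theoretic variant out of $\H\o\m(B,D)$.
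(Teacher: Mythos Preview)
Your diagnosis in the ``Main obstacle'' paragraph is exactly right, and the paper's own proof confirms it. The paper does \emph{not} construct a map out of $\H\o\m(B,C)$ at all: its commuting square has top row $\Pa_{B,D}:B\to D\ot\H(B,D)$ and right vertical arrow $id_D\ot\S$, so the $\S$ actually produced has domain $\H\o\m(B,D)$, obtained precisely by applying the universal property of $\H\o\m(B,D)$ to the quantum family
\[
(\Pa_{C,D}\ot id_{\H(B,C)})\,\Pa_{B,C}:B\longrightarrow D\ot\H(C,D)\ot\H(B,C).
\]
This is your ``composition-theoretic variant'' verbatim; the statement of the theorem simply contains a typo (the source should read $\H\o\m(B,D)$, as also fits the subsequent remark about $(\H(M,M),\S)$ being a quantum semigroup when $B=C=D=M$).

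Your first construction --- the embedding $a\mapsto 1_{\H(C,D)}\ot a$ --- is a valid unital $*$-homomorphism satisfying the literal statement, but it is not the canonical morphism the paper has in mind, and it does not arise from the universal property in any nontrivial way (indeed $\Pa_{C,D}$ never appears). So the only genuine gap in your write-up is committal: the intended proof is the one you already sketched in the final paragraph, and the apparent obstacle is an error in the theorem's display, not in your reasoning.
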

\begin{proof}
The desired morphism $\S$ is the unique morphism such that the following diagram becomes commutative:
\[\xymatrix{B\ar[rrrr]^-{\Pa_{B,D}}\ar[d]^{\Pa_{B,C}}&& && D\ot\H(B,D)\ar[d]^{id_{D}\ot\S}\\
C\ot\H(B,C)\ar[rrrr]^-{\Pa_{C,D}\ot id_{\H(B,C)}}&& && D\ot\H(C,D)\ot\H(B,C).}\]
\end{proof}
By the latter assumptions, let $B=C=D=M$ be in $\C^*_{fd}$. Then it is proved in \cite{S1}, that the map
$$\S:\H\o\m(M,M)\to\H\o\m(M,M)\ot\H\o\m(M,M)$$
is a coassociative comultiplication (i.e. $(id_{\H(M,M)}\ot\S)\S=(\S\ot id_{\H(M,M)})\S$) and thus the pair
$(\H(M,M),\S)$ is a compact quantum semigroup.
\begin{theorem}\label{t2}
Let $(\{B_i\},\{\P_{ij}\})_{i\leq j\in I}$ be a diredcted system in $\C^*_{fg}$, on the directed set $I$, such that
$B=\varinjlim B_i\in\C^*_{fg}$. Suppose that $C\in\C^*_{fd}$, then the canonical morphism from
$A=\varinjlim\H\o\m(B_i,C)$ to $\H\o\m(B,C)$ is surjective.
\end{theorem}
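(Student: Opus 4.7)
The strategy is to use the generating-set description of $\H\o\m(B,C)$ from Theorem \ref{t1}, and then lift each generator back through the directed system via Lemma \ref{l1}.

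First I would fix the notation for the canonical morphisms: let $\beta_i:B_i\to B$ be the structure morphisms of the direct limit $B=\varinjlim B_i$, and let $\alpha_i:\H\o\m(B_i,C)\to A$ be the structure morphisms of $A=\varinjlim\H\o\m(B_i,C)$. Since $\H\o\m(-,C)$ is a covariant functor into $\C^*$, the family $\{\H\o\m(\beta_i,id_C)\}_i$ is compatible with the directed system on the $\H\o\m(B_i,C)$ side, so by the universal property of the direct limit it induces the canonical morphism $\gamma:A\to\H\o\m(B,C)$ with $\gamma\alpha_i=\H\o\m(\beta_i,id_C)$ for every $i$. My task is to show that $\gamma$ is surjective.

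By Theorem \ref{t1}, the C*-algebra $\H\o\m(B,C)$ is the closed $*$-subalgebra generated by the set
$$G=\{(\om\ot id_{\H(B,C)})\Pa_{B,C}(b):b\in B,\om\in C^\circ\}.$$
For each $i$ the defining diagram of $\H\o\m(\beta_i,id_C)$ gives a commutative square
\[\xymatrix{B_i\ar[rr]^-{\Pa_{B_i,C}}\ar[d]^{\beta_i}&& C\ot\H\o\m(B_i,C)\ar[d]^{id_C\ot\H\o\m(\beta_i,id_C)}\\
B\ar[rr]^-{\Pa_{B,C}}&& C\ot\H\o\m(B,C),}\]
to which Lemma \ref{l1} applies. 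It yields, for every $b_i\in B_i$ and $\om\in C^\circ$, the identity
$$\H\o\m(\beta_i,id_C)(\om\ot id_{\H(B_i,C)})\Pa_{B_i,C}(b_i)=(\om\ot id_{\H(B,C)})\Pa_{B,C}(\beta_i(b_i)).$$
Since the left-hand side factors through $\gamma\alpha_i$, every element of the subset
$$G_0=\{(\om\ot id_{\H(B,C)})\Pa_{B,C}(\beta_i(b_i)):i\in I,b_i\in B_i,\om\in C^\circ\}\subseteq G$$
lies in the image of $\gamma$.

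Finally I would close the argument with two standard facts. First, $\bigcup_i\beta_i(B_i)$ is dense in the C*-algebraic direct limit $B$; combined with continuity of $\Pa_{B,C}$ and of each $\om\in C^\circ$, this shows that $G_0$ is dense in $G$, hence the closed $*$-subalgebra generated by $G_0$ equals the closed $*$-subalgebra generated by $G$, which by Theorem \ref{t1} is all of $\H\o\m(B,C)$. Second, the image of the $*$-homomorphism $\gamma$ is automatically a closed $*$-subalgebra of $\H\o\m(B,C)$, and it contains $G_0$, so it contains the closed $*$-subalgebra generated by $G_0$, namely $\H\o\m(B,C)$. The only point requiring any care is that although each $\H\o\m(B_i,C)$ and $\H\o\m(B,C)$ lie in $\C^*_{fg}$, the direct limit $A$ is only being computed in $\C^*$; everything above uses only the universal property in $\C^*$, so this causes no difficulty.
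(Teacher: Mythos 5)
Your proposal is correct and follows essentially the same route as the paper: identify the generating set $G$ of $\H\mathfrak{o}\mathfrak{r}(B,C)$ via Theorem \ref{t1}, transport generators from the $\H\mathfrak{o}\mathfrak{r}(B_i,C)$ using Lemma \ref{l1} and the compatibility $\gamma\alpha_i=\H\mathfrak{o}\mathfrak{r}(\beta_i,id_C)$, and conclude by closedness of the image of a $*$-homomorphism. One point in your favour: the paper asserts that every $b\in B$ is of the form $f_i(b_i)$, which holds only on the dense algebraic inductive limit $\bigcup_i f_i(B_i)$, not on its completion $B$; your extra density step (showing $G_0$ is dense in $G$, so the closed $*$-subalgebra generated by $G_0$ already exhausts $\H\mathfrak{o}\mathfrak{r}(B,C)$) is exactly what is needed to close that small gap.
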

\begin{proof}
For every $i\in I$, let $f_i:B_i\to B$ and $g_i:\H(B_i,C)\to A$ be the canonical morphisms of direct limit
structures. Consider the directed system
$$(\{\H(B_i,C)\},\{\H(\P_{ij},id_C)\})_{i\leq j\in I}.$$
Then, for every $i\leq j$, we have
$$\H(f_i,id_C)=\H(f_j,id_C)\H(\P_{ij},id_C).$$
Thus by the universal property of direct limit, there is a unique morphism $\S:A\to\H(B,C)$ such that
\begin{equation}\label{e7}
\S g_i=\H(f_i,id_C)
\end{equation}
for every $i\in I$. We must prove that $\S$ is surjective. By Theorem \ref{t1}, the set
$$G=\{(\om\ot id_{\H(B,C)})\Pa_{B,C}(b): b\in B,\om\in C^\circ\}$$
is a generator for $\H(B,C)$. Let $b\in B$ and $\om\in C^\circ$ be arbitrary and fixed. There are $i\in I$ and $b_i\in B_i$ such that $f_i(b_i)=b$. Let
$$a_i=(\om\ot id_{\H(B_i,C)})\Pa_{B_i,C}(b_i)\in\H(B_i,C),$$
then by Lemma \ref{l1}, $\H(f_i,id_C)(a_i)=b$. Now, by (\ref{e7}), we have $\S g_i(a_i)=b$.
Thus $G$ is in the image of $\S$, and $\S$ is surjective.
\end{proof}
\begin{question}
Is the map $\S$, constructed in Theorem \ref{t2}, injective?
\end{question}
\begin{theorem}\label{t3}
Let $C$ be a commutative finite dimensional C*-algebra and let $B_1,B_2\in\C^*_{fg}$. Then there is a canonical
surjective morphism
$$\S:\H\o\m(B_1\ot B_2,C)\to\H\o\m(B_1,C)\ot\H\o\m(B_2,C).$$
\end{theorem}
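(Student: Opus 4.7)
The plan is to produce $\S$ via the universal property of $\H\o\m(B_1\ot B_2,C)$, following the same template the author uses for the preceding surjectivity theorems in this section. The decisive new ingredient is commutativity of $C$: it means that the multiplication $m_C:C\ot C\to C$ is a unital *-homomorphism, which is exactly what will make the construction legal.

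First I would assemble a unital *-homomorphism $\P:B_1\ot B_2\to C\ot(\H(B_1,C)\ot\H(B_2,C))$ as the three-step composition: apply $\Pa_{B_1,C}\ot\Pa_{B_2,C}$ into $C\ot\H(B_1,C)\ot C\ot\H(B_2,C)$, flip the second and third tensor factors by the canonical *-isomorphism, and then apply $m_C\ot id\ot id$. Each step is a *-homomorphism precisely because $m_C$ is, so commutativity of $C$ enters here and only here. The universal property of $\H\o\m(B_1\ot B_2,C)$ then yields a unique morphism $\S:\H(B_1\ot B_2,C)\to\H(B_1,C)\ot\H(B_2,C)$ with $(id_C\ot\S)\Pa_{B_1\ot B_2,C}=\P$; this is the candidate.

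For surjectivity I would follow the pattern of the preceding theorems. By Theorem \ref{t1}, the codomain is generated as a C*-algebra by the elementary tensors of the form $[(\om_1\ot id)\Pa_{B_1,C}(b_1)]\ot[(\om_2\ot id)\Pa_{B_2,C}(b_2)]$; each such tensor factors as $(x\ot 1)(1\ot y)$, so it suffices to put elements of the form $x\ot 1$ and $1\ot y$ into the image of $\S$. Applying Lemma \ref{l1} to the diagram defining $\S$ with $b=b_1\ot 1_{B_2}$, and using that $\Pa_{B_2,C}$ is unital together with $m_C(c\ot 1_C)=c$, a direct calculation should give $\S((\om_1\ot id)\Pa_{B_1\ot B_2,C}(b_1\ot 1_{B_2}))=[(\om_1\ot id)\Pa_{B_1,C}(b_1)]\ot 1$, and the symmetric computation produces $1\ot[(\om_2\ot id)\Pa_{B_2,C}(b_2)]$. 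Since the image of a *-homomorphism between C*-algebras is closed, the image of $\S$ is a closed *-subalgebra containing a generator set, hence equals the whole codomain.

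I do not anticipate serious obstacles. The definition of $\P$ is essentially forced by the Gelfand-duality picture: commutativity of $C$ provides the *-algebraic diagonal on $\mathfrak{q}C$ dual to $m_C$, which is exactly what is needed to merge the two copies of the parameter space $C$ into one. The surjectivity argument is a near-verbatim adaptation of the two earlier surjectivity proofs in the section. The only delicate step will be the bookkeeping of tensor-factor positions when computing $\P(b_1\ot 1_{B_2})$ and applying $\om_1\ot id$; once that is done cleanly the rest follows formally.
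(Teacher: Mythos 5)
Your proposal matches the paper's proof essentially step for step: the same map $mF(\Pa_{B_1,C}\ot\Pa_{B_2,C})$ (multiplication on the two $C$ factors after a flip) induces $\S$ via the universal property, and surjectivity is obtained exactly as you describe, by using Lemma \ref{l1} on $b_1\ot 1_{B_2}$ and $1_{B_1}\ot b_2$ to land the generators $a_1\ot 1$ and $1\ot a_2$ from Theorem \ref{t1} in the closed image of $\S$. No substantive differences.
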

\begin{proof}
Let $m:C\ot C\to C$ be defined by $m(c_1,c_2)=c_1c_2$ for $c_1,c_2\in C$. Since $C$ is commutative, $m$ is a morphism.
Also, let
$$F:C\ot\H(B_1,C)\ot C\ot\H(B_2,C)\to C\ot C\ot\H(B_1,C)\ot\H(B_2,C)$$
be the flip map (i.e. $c_1\ot x_1\ot c_2\ot x_2\longmapsto c_1\ot c_2\ot x_1\ot x_2$) and let $\S$ be the unique
morphism such that the following diagram is commutative:
\[\xymatrix{B_1\ot B_2\ar[rr]^-{\Pa_{B_1\ot B_2,C}}\ar[d]^{\Pa_{B_1,C}\ot\Pa_{B_2,C}}&&
C\ot\H(B_1\ot B_2,C)\ar[d]^{id_{C}\ot\S}\\
C\ot\H(B_1,C)\ot C\ot\H(B_2,C)\ar[rr]^-{mF}&&  C\ot\H(B_1,C)\ot\H(B_2,C).}\]
\\Now, we show that $\S$ is surjective. By Theorem \ref{t1}, the set $G_1$ and $G_2$, defined by
$$G_1=\{(\om\ot id_{\H(B_1,C)})\Pa_{B_1,C}(b_1): b_1\in B_1,\om\in C^\circ\}$$
and
$$G_2=\{(\om\ot id_{\H(B_2,C)})\Pa_{B_2,C}(b_2): b_2\in B_2,\om\in C^\circ\}$$
are generator sets of $\H(B_1,C)$ and $\H(B_2,C)$, respectively. Thus the set
$$G=\{a_1\ot1_{\H(B_2,C)},1_{\H(B_1,C)}\ot a_2: a_1\in G_1,a_2\in G_2\}$$
is a generator for $\H(B_1,C)\ot\H(B_2,C)$. On the other hand, for $b_1\in B_1$ and $\om\in C^\circ$, we have
\begin{equation*}
\begin{split}
&((\om\ot id_{\H(B_1,C)})\Pa_{B_1,C}(b_1))\ot1_{\H(B_2,C)}\\
=&(\om\ot id_{\H(B_1,C)\ot\H(B_2,C)})mF(\Pa_{B_1,C}\ot\Pa_{B_2,C})(b_1\ot1_{\H(B_2,C)})\\
=&\S(\om\ot id_{\H(B_1\ot B_2,C)})\Pa_{B_1\ot B_2,C}(b_1\ot1_{\H(B_2,C)})\h{25mm}\text{(by Lemma \ref{l1})}.
\end{split}
\end{equation*}
\\Thus for every $a_1\in G_1$, $a_1\ot1_{\H(B_2,C)}$ is in the image of $\S$. Similarly, for every $a_2\in G_2$,
$1_{\H(B_1,C)}\ot a_2$ is also in the image of $\S$. Therefore $G$ is in the image of $\S$, and $\S$ is surjective.
\end{proof}
The above result, is similar to the following trivial fact in $\bf Top$:
\\{\it Let $Y_1,Y_2,X$ be topological spaces, then the spaces $Mor(X,Y_1\times Y_2)$ and $Mor(X,Y_1)\times Mor(X,Y_2)$
are homeomorphic.}
\begin{question}
Is the map $\S$ of Theorem \ref{t3}, injective?
\end{question}
\section{Symbolic quantum family of paths}
In this section we make a suggestion  for more research.
Let $B\in\C_{fg}^*$ and $C\in\C^*_{fd}$. We have a contravariant functor
$$\H\o\m(B,-):\C^*_{fd}\to\C_{fg}^*,$$
and a covariant functor
$$\H\o\m(-,C):\C_{fg}^*\to\C_{fg}^*.$$
Thus one can consider any contravariant functor $\mathfrak{F}:\C^*_{fd}\to\C_{fg}^*,$ (resp. covariant functor
$\mathfrak{G}:\C_{fg}^*\to\C_{fg}^*$) as a generalized notion of a unital (resp. finite dimensional) C*-algebra.
Note that since $\H\o\m(B,\mathbb{C})\cong B$, where $\mathbb{C}$ denotes the C*-algebra of complex numbers,
one can recover the C*-algebra $B$ from the data of the functor $\H\o\m(B,-)$.

Recall from elementary Algebraic Topology (\cite{Sp}), that for every $X\in{\bf Top}$
the {\it cylinder} space $\mathfrak{c}X$ of $X$ is the space $X\times I$
with product topology and the {\it path} space $\mathfrak{p}X$ of $X$ is the space $Mor(I,X)$
of all continuous maps from $I$ to $X$ with compact open topology, where $I$ is the interval $0\leq r\leq1$.
The covariant functors $\mathfrak{c}$ and $\mathfrak{p}$
are {\it adjoint} functors on $\bf Top$:
\begin{equation}\label{e5}
Mor(\mathfrak{c}X,Y)\cong Mor(X,\mathfrak{p}Y),
\end{equation}
for every topological spaces $X$ and $Y$. For every unital C*-algebra $A$ the cylinder $\mathfrak{c}A$
of $A$ is the C*-algebra of all continuous maps $f:I\to A$, or equivalently $\mathfrak{f}I\ot A$.
The cylinder functors $\mathfrak{c}:{\bf Top}_c\to{\bf Top}_c$
and $\mathfrak{c}:\C^*\to\C^*$ are {\it compatible} with respect to Gelfand's duality,
that is for every $X\in{\bf Top}_c$ and $A\in\C^*_{com}$ we have:
$$\mathfrak{c}\mathfrak{f}X=\mathfrak{f}\mathfrak{c}X,\h{20mm}\mathfrak{c}\mathfrak{q}A=\mathfrak{q}\mathfrak{c}A.$$

As is indicated  by Alain Connes there is no interesting notion of {\it path} or {\it loop} in
non commutative (quantum) spaces (page 544 of \cite{C}). In fact, the natural notion for paths in a quantum
space $\mathfrak{q}A$ is  nonzero *-homomorphisms from $A$ to $\mathfrak{f}I$. But such *-homomorphisms  do not exist
for almost all noncommutative spaces.
Using the functors  $\H\o\m$ and $\mathfrak{c}$, the generalized notion of C*-algebras as functors, and
Identity (\ref{e5}), we {\it symbolically} define the concept of quantum space of
{\it paths} in a finite quantum space $\mathfrak{q} C$:
If we replace $Mor$, $X$, $Y$ and $\mathfrak{c}X$ with the functor
$\H\o\m$, unital C*-algebra $B$, finite dimensional C*-algebra $C$, and the cylinder $\mathfrak{c}B$ of $B$, respectively, then we have {\it formally},
\begin{equation}\label{e6}
\H\o\m(\mathfrak{c}B,C)\cong\H\o\m(B,\mathfrak{p}C).
\end{equation}
Therefore one can consider the covariant functor
$$\H\o\m(\mathfrak{c}-,C):\C^*\to\C^*$$
as a {\it symbolic} definition of $\mathfrak{p}C$.
\begin{remark}
\item[i)] In the above construction, we lost the natural duality between ${\bf Top}_c$ and $\C^*_{com}$
(note with a more care the replacement of the notations in Formulas  (\ref{e5}) and (\ref{e6})).
In the literature of Non Commutative Algebraic Topology, this type of constructions are called
{\it wrong-way} functorial.
\item[ii)] Using suspension of C*-algebras instead cylinder functor in (\ref{e6}), one
can derive a formal definition for the non commutative loop space. But there is a gap in the transformation
from commutative to non commutative case. The problem is that, in the commutative case,
the (reduced) suspension functor and the loop functor are adjoint on the category of
pointed topological spaces and point preserving maps instead of ${\bf Top}$.
Also the standard notion of suspension for C*-algebras involves non unital C*-algebras.
\item[iii)]There are some other notions of loop and path space for algebras \cite{K},
that are very far from our construction.
\end{remark}
\bibliographystyle{amsplain}

\begin{thebibliography}{10}
\bibitem{C}
CONNES, A.:  Noncommutative Geometry,  Academic Press, 1994.
\bibitem{J}
JACKSON, J. R.: Spaces of mappings on topological products with applications to homotopy theory,
 {\it Proc. Amer. Math. Soc.}  \textbf{3} (1952), 327-333.
\bibitem{K}
KAROUBI, M.: K-Th\'{e}orie,  Les presses de l'universit\'{e} de Montreal, Montr\'{e}al, 1971.
\bibitem{S1}
SOLTAN, P. M.: Quantum families of maps and quantum semigroups on finite quantum spaces,
{\it J. Geom. Phys.} \textbf{3}/59 (2009), 354-368.
\bibitem{Sp}
SPANIER, E. H.: Algebraic Topology,  Springer, 1981.
\bibitem{W1}
WORONOWICZ, S. L.: Pseudogroups, pseudospaces and Pontryagin duality,  {\it Proceedings of the
International Conference on Mathematical Physics, Lausanne} \textbf{116} (1979), 407-412.
\end{thebibliography}

\end{document}